\def\BibTeX{{\rm B\kern-.05em{\sc i\kern-.025em b}\kern-.08em
    T\kern-.1667em\lower.7ex\hbox{E}\kern-.125emX}}
\newcommand{\EE}{\mathbb{E}}
\newcommand{\E}{\mathrm{E}}
\newcommand{\PP}{\mathrm{P}}
    \newcommand{\dto}{\xrightarrow{d}}
    \newcommand{\vto}{\xrightarrow{v}}
    \newcommand{\fidi}{\xrightarrow{\text{fidi}}}
\newcommand{\eqd}{\stackrel{d}{=}}
 \newcommand{\floor}[1]{\lfloor#1\rfloor}
\theoremstyle{plain}
\newtheorem{theorem}{Theorem}[section]
\newtheorem{lemma}[theorem]{Lemma}
\newtheorem{proposition}[theorem]{Proposition}
\theoremstyle{definition}
\newtheorem{remark}[theorem]{Remark}
\newtheorem{example}[theorem]{Example}
\newtheorem{cond}[theorem]{Condition}
\numberwithin{equation}{section}
\begin{document}

\title[Functional weak convergence of partial maxima processes] 
{Functional weak convergence of partial maxima processes}

%
\author{Danijel Krizmani\'{c}}

\address{Danijel Krizmani\'{c}\\ Department of Mathematics\\
        University of Rijeka\\
        Radmile Matej\v{c}i\'{c} 2, 51000 Rijeka\\
        Croatia}
\email{dkrizmanic@math.uniri.hr}


\subjclass[2010]{Primary 60F17; Secondary 60G52, 60G70}
\keywords{extremal index, functional limit theorem, regular variation, Skorohod $J_{1}$ topology, strong mixing, weak convergence}


\begin{abstract}
For a strictly stationary sequence of nonnegative regularly varying random variables $(X_{n})$ we study functional weak convergence of partial maxima processes $M_{n}(t) = \bigvee_{i=1}^{\lfloor nt \rfloor}X_{i},\,t \in [0,1]$ in the space $D[0,1]$ with the Skorohod $J_{1}$ topology. Under the strong mixing condition, we give sufficient conditions for such convergence when clustering of large values do not occur. We apply this result to stochastic volatility processes. Further we give conditions under which the regular variation property is a necessary condition for $J_{1}$ and $M_{1}$ functional convergences in the case of weak dependence. We also prove that strong mixing implies the so-called Condition $\mathcal{A}(a_{n})$ with the time component.
\end{abstract}

\maketitle

\section{Introduction}
\label{intro}

Let $(X_{n})$ be a strictly stationary sequence of nonnegative random variables. Denote by $M_{n} = \max \{X_{i} : i=1,\ldots,n \}$, $n \geq 1$, and let $(a_{n})$ be a sequence of positive real numbers such that
\begin{equation}\label{eq:an}
 n \PP(X_{1}>a_{n}) \to 1 \qquad \textrm{as} \ n \to \infty.
 \end{equation}
If the sequence $(X_{n})$ is i.i.d.~then it is well known (see for example Resnick~\cite{Re07}, Propostion 7.1) that
\begin{equation}\label{e:FLTe}
\frac{M_{n}}{a_{n}} \dto S,
 \end{equation}
 for some non-degenerate random variable $S$ if and only if $X_{1}$ is \emph{regularly varying with index} $\alpha >0$, that is,
\begin{equation}\label{e:regvarosn2}
\PP (X_{1} > x) = x^{-\alpha} L(x),
\end{equation}
where $L(\,\cdot\,)$ is a slowly varying function at $\infty$, i.e. for every $t>0$, $L(tx)/L(x) \to 1$ as $x \to \infty$. In this case $S$ is a Fr\'{e}chet random variable with distribution
 $$ \PP(S \leq x)  = e^{-x^{-\alpha}}, \qquad x > 0.$$
The regular variation property (\ref{e:regvarosn2}) is equivalent to
\begin{equation}\label{eq:regvar}
 n \PP \Big( \frac{X_{1}}
{a_{n}} \in \cdot \Big) \vto \mu (\,\cdot\,), \qquad n \ \to \infty,
\end{equation}
with $\mu$ being a measure of the form
 $ \mu(dx) = \alpha x^{-\alpha -1}1_{(0,\infty)}(x)\,dx.$

 The functional generalization of (\ref{e:FLTe}) has been studied extensively in probability literature. Define the partial maxima processes
$$ M_{n}(t) = \bigvee_{i=1}^{\lfloor nt
   \rfloor}\frac{X_{i}}{a_{n}}, \qquad t \in [0,1].$$
Here $\floor{x}$ represents the integer part of the real number $x$. In functional limit theory one investigates the asymptotic behavior of the processes $M_{n}(\,\cdot\,)$ as $n \to \infty$. Since the sample paths of $M_{n}(\,\cdot\,)$ are elements of the space $D[0,1]$ of all right-continuous real valued functions on $[0,1]$ with left limits, weak convergence of distributions of $M_{n}(\,\cdot\,)$ are is considered with respect to the one of the Skorohod topologies on $D[0,1]$ introduced in Skorohod~\cite{Sk56}.

In the i.i.d.~case Lamperti~\cite{La64} (see also Proposition 7.2 in Resnick~\cite{Re07}) showed that weak convergence of processes $M_{n}(\,\cdot\,)$ in $D[0,1]$ with the Skorohod $J_{1}$ topology is equivalent to the regular variation property of $X_{1}$, with an extremal process as a limit. In the dependent case, Adler~\cite{Ad78} obtained $J_{1}$ functional convergence with the weak dependence conditions similar to conditions $D$ and $D'$ introduced by Leadbetter~\cite{Le74} and~\cite{Le76}. Mori~\cite{Mo77} and Durrett and Resnick~\cite{DuRe78} obtained $J_{1}$ convergence of the maxima processes from the convergence of a certain two-dimensional point processes. The $J_{1}$ topology is appropriate when large values of $X_{n}$ do not cluster. A standard tool in describing clustering of large values is the extremal index of the sequence $(X_{n})$, which is equal to $1$ when large values do not cluster and less than $1$ when clustering occurs. In the latter case $J_{1}$ convergence in general fails to hold, although convergence with respect to the weaker Skorohod $M_{1}$ topology might still hold. Recently Krizmani\'{c}~\cite{Kr14} obtained $M_{1}$ functional convergence under the properties of weak dependence and joint regular variation for the sequence $(X_{n})$.

Since we study
extremes of random processes, nonnegativity of random variables
$X_{n}$ is not a restrictive assumption. First, we introduce the essential
ingredients about point processes, regular variation and weak dependence in Section
\ref{s:one}. Here we also give a formal proof that strong mixing implies the so-called Condition $\mathcal{A}(a_{n})$ of Davis and Hsing~\cite{DaHs95} with the time component. In Section~\ref{s:two}, for a strictly stationary sequence of nonnegative regularly varying random variables with extremal index equal to $1$ we show $J_{1}$ convergence of the partial maxima process $M_{n}(\,\cdot\,)$ under the strong mixing condition. The regular variation property is a necessary condition for the $J_{1}$ convergence of the partial maxima process in the i.i.d.~case (c.f. Proposition 7.2 in Resnick~\cite{Re07}). In Section~\ref{s:three} we extend this result to the weak dependent case when clustering of large values do not occur. We further show the necessity of regular variation also when we have convergence in the weaker Skorohod $M_{1}$ topology when clustering of large values occurs. Some ideas and techniques used in this paper already appeared in Krizmani\'{c}~\cite{Kr14b} where functional weak convergence for partial sum processes was investigated.

\section{Preliminaries}\label{s:one}

Let $\mathbb{E} = (0, \infty]$. The space $\mathbb{E}$ is equipped with the topology by which a set $B \subseteq \EE$
has compact closure if and only if there exists $u > 0$ such that $B \subseteq \EE_u = (u, \infty]$. It suffices to take the following metric
 $$ \rho (x,y) = \bigg| \frac{1}{x} - \frac{1}{y} \bigg|, \qquad x, y \in \EE.$$
 Let $\mathbf{M}_{+}(\mathbb{E})$ be the class of all Radon measures on $\mathbb{E}$. A useful topology for $\mathbf{M}_{+}(\mathbb{E})$ is the vague topology which renders $\mathbf{M}_{+}(\mathbb{E})$ a complete separable metric space. If $\mu_{n} \in \mathbf{M}_{+}(\mathbb{E})$, $n \geq 0$, then $\mu_{n}$ converges vaguely to $\mu_{0}$ (written $\mu_{n} \xrightarrow{v} \mu_{0}$) if
 $\int f\,d \mu_{n} \to \int f\,d \mu_{0}$ for all $f \in C_{K}^{+}(\mathbb{E})$, where $C_{K}^{+}(\mathbb{E})$ denotes the class of all nonnegative continuous real functions on $\mathbb{E}$ with compact support.

A Radon point measure is an element of $\mathbf{M}_{+}(\mathbb{E})$ of the form $m = \sum_{i}\delta_{x_{i}}$, where $\delta_{x}$ is the Dirac measure. By $\mathbf{M}_{p}(\mathbb{E})$ we denote the class of all Radon point measures. A point process on $\mathbb{E}$ is an $\mathbf{M}_{p}(\mathbb{E})$--valued random element, defined on a given probability space. For more background on the theory of point processes we refer to Kallenberg~\cite{Ka83}.

We say that a strictly stationary $\mathbb{R}_{+}$--valued process $(\xi_{n})$ is \emph{jointly regularly varying} with index
$\alpha \in (0,\infty)$ if for any nonnegative integer $k$ the
$k$-dimensional random vector $\boldsymbol{\xi} = (\xi_{1}, \ldots ,
\xi_{k})$ is multivariate regularly varying with index $\alpha$ (see Basrak et al.~\cite{BKS}).
Theorem~2.1 in Basrak and Segers~\cite{BaSe} provides a convenient
characterization of joint regular variation: it is necessary and
sufficient that there exists a process $(Y_n)_{n \in \mathbb{Z}}$
with $\PP(Y_0 > y) = y^{-\alpha}$ for $y \geq 1$ such that as
$x \to \infty$,
\begin{equation}\label{e:tailprocess}
  \bigl( (x^{-1}\xi_n)_{n \in \mathbb{Z}} \, \big| \, \xi_0 > x \bigr)
  \fidi (Y_n)_{n \in \mathbb{Z}},
\end{equation}
where "$\fidi$" denotes convergence of finite-dimensional
distributions. The process $(Y_{n})$ is called
the \emph{tail process} of $(\xi_{n})$.

The stochastic processes that we consider have discontinuities, and therefore for the function space of its sample paths we take the space $D[0,1]$ of real valued c\`{a}dl\`{a}g functions on $[0,1]$. Usually the space $D[0,1]$ is endowed with the Skorohod $J_{1}$ topology, which is appropriate when clustering of large values do not occur.
When stochastic processes exhibit rapid successions of
jumps within temporal clusters of large values, collapsing in the limit to a single
jump, the $J_{1}$ topology become inappropriate since the $J_{1}$ convergence fails to hold. The next option is to use a weaker topology
in which the functional convergence may still hold, for example the Skorohod $M_{1}$ topology.
For definitions and discussion about the $J_{1}$ and $M_{1}$ topologies and the corresponding metrics $d_{J_{1}}$ and $d_{M_{1}}$ we refer to Resnick~\cite{Re07}, section 3.3.4 and Whitt~\cite{Whitt02}, sections 3.3 and 12.3--12.5.

Let $(X_{n})_{n \in \mathbb{Z}}$ be a strictly stationary sequence of nonnegative random variables and assume it is jointly regularly varying with index $\alpha >0$.
A standard procedure in obtaining functional limit theorems for maxima processes consists first in obtaining limit results for the corresponding point processes of jumps and then transferring this convergence to maxima processes. In order to establish this point process convergence, Basrak et al.~\cite{BKS} introduced the following time-space processes
\begin{equation*}\label{E:ppspacetime}
  N_{n} = \sum_{i=1}^{n} \delta_{(i / n,\,X_{i} / a_{n})} \qquad \textrm{for all} \ n\in \mathbb{N},
\end{equation*}
where $(a_{n})$ is a sequence of positive real numbers such that (\ref{eq:an}) holds.
They obtained weak convergence of $N_{n}$ in the state space $[0,1] \times \mathbb{E}_{u}$ for every $u>0$ under weak dependence Conditions~\ref{c:mixcond1} and~\ref{c:mixcond2} given below. When $N_{n}$ converges to a Poisson process, Tyran-Kami\'{n}ska~\cite{TK10} in Theorem 4.2 obtained some necessary conditions for this point process convergence in terms of partial maxima processes.

\begin{cond}\label{c:mixcond1}
There exists a sequence of positive integers $(r_{n})$ such that $r_{n} \to \infty $ and $r_{n} / n \to 0$ as $n \to \infty$ and such that for every $f \in C_{K}^{+}([0,1] \times \mathbb{E})$, denoting $k_{n} = \lfloor n / r_{n} \rfloor$, as $n \to \infty$,
\begin{equation}\label{e:mixcon}
 \E \biggl[ \exp \biggl\{ - \sum_{i=1}^{n} f \biggl(\frac{i}{n}, \frac{X_{i}}{a_{n}}
 \biggr) \biggr\} \biggr]
 - \prod_{k=1}^{k_{n}} \E \biggl[ \exp \biggl\{ - \sum_{i=1}^{r_{n}} f \biggl(\frac{kr_{n}}{n}, \frac{X_{i}}{a_{n}} \biggr) \biggr\} \biggr] \to 0.
\end{equation}
\end{cond}


\begin{cond}\label{c:mixcond2}
There exists a sequence of positive integers $(r_{n})$ such that $r_{n} \to \infty $ and $r_{n} / n \to 0$ as $n \to \infty$ and such that for every $u > 0$,
\begin{equation}
\label{e:anticluster}
  \lim_{m \to \infty} \limsup_{n \to \infty}
  \PP \biggl( \max_{m \leq |i| \leq r_{n}} X_{i} > ua_{n}\,\bigg|\,X_{0}>ua_{n} \biggr) = 0.
\end{equation}
\end{cond}
Condition~\ref{c:mixcond1} is implied by strong mixing, which we show in the proposition below. This is the so-called Condition $\mathcal{A}(a_{n})$ of Davis and Hsing~\cite{DaHs95}, and it seems that it has not been proved formally before. Recall that a sequence of random variables $(\xi_{n})$ is strongly mixing if $\alpha_{n} \to 0$ as $n \to \infty$, where
$$\alpha_{n} = \sup \{|\Pr (A \cap B) - \Pr(A) \Pr(B)| : A \in \mathcal{F}_{-\infty}^{j}, B \in \mathcal{F}_{j+n}^{\infty}, j=1,2, \ldots \}$$
and $\mathcal{F}_{k}^{l} = \sigma( \{ \xi_{i} : k \leq i \leq l \} )$ for $-\infty \leq k \leq l \leq \infty$.

\begin{proposition}\label{p:mixcond1}
Suppose $(X_{n})$ is a strictly stationary sequence of nonnegative regularly varying random variables with index $\alpha>0$. If $(X_{n})$ is strongly mixing then Condition~\ref{c:mixcond1} holds.
\end{proposition}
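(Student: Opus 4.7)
The plan is the standard big-block/small-block decomposition. I would partition $\{1, \ldots, n\}$ into $k_{n}$ consecutive blocks of length $r_{n}$, and within every block reserve a trailing small block of length $\ell_{n}$, so that the big blocks are separated by gaps of length $\ell_{n}$. Using only $\alpha_{n}\to 0$, one first picks $\ell_{n}\to\infty$ slowly enough that $\alpha_{\ell_{n}}$ is of order $o(1/k_{n})$, and then $r_{n}\to\infty$ with $r_{n}/n\to 0$ and $\ell_{n}/r_{n}\to 0$ (a classical diagonal argument, e.g.\ taking $r_{n}$ to be the geometric mean of $\ell_{n}$ and $n$). The leftover of fewer than $r_{n}$ indices past $k_{n}r_{n}$ is handled by the same estimates as below.

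Fix $f\in C_{K}^{+}([0,1]\times\mathbb{E})$ with support in $[0,1]\times[\delta,\infty]$ and $\norm{f}_{\infty}\leq K$. Writing $I_{k}=\{(k-1)r_{n}+1,\ldots, kr_{n}-\ell_{n}\}$ for the $k$-th big block, $J_{k}=\{kr_{n}-\ell_{n}+1,\ldots, kr_{n}\}$ for the $k$-th small block, and $T_{k}=\sum_{i\in I_{k}} f(kr_{n}/n,\, X_{i}/a_{n})$, the first step is to show
\[
 \left|\E\exp\Bigl\{-\sum_{i=1}^{n} f(i/n,X_{i}/a_{n})\Bigr\} - \E\exp\Bigl\{-\sum_{k=1}^{k_{n}} T_{k}\Bigr\}\right| \to 0.
\]
The inequality $|e^{-a}-e^{-b}|\leq |a-b|$ reduces this to two sources of error: (i) the replacement of $i/n$ by $kr_{n}/n$ inside each big block, and (ii) the contribution from the small blocks $J_{k}$. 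Since $f$ vanishes for $x\leq\delta$, each summand is bounded by $K\mathbf{1}\{X_{i}>\delta a_{n}\}$; combined with $n\PP(X_{1}>\delta a_{n})\to\delta^{-\alpha}$, error (ii) is at most $Kk_{n}\ell_{n}\PP(X_{1}>\delta a_{n})\sim K\delta^{-\alpha}(\ell_{n}/r_{n})\to 0$, while error (i) is at most $\omega_{f}(r_{n}/n)\cdot n\PP(X_{1}>\delta a_{n})\to 0$ by uniform continuity of $f$.

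Next, I would decouple the big blocks via strong mixing. Each $e^{-T_{k}}\in[0,1]$ is $\mathcal{F}_{(k-1)r_{n}+1}^{kr_{n}-\ell_{n}}$-measurable, so consecutive big blocks are separated by a gap of $\ell_{n}$. Applying the standard covariance inequality $|\operatorname{Cov}(U,V)|\leq 4\alpha_{m}$ for $U,V$ bounded by $1$ and separated by $m$ indices, iterated through the telescoping identity $\prod a_{k}-\prod b_{k}=\sum_{j}(\prod_{k<j}a_{k})(a_{j}-b_{j})(\prod_{k>j}b_{k})$, yields
\[
 \Bigl|\E\exp\Bigl\{-\sum_{k=1}^{k_{n}}T_{k}\Bigr\} - \prod_{k=1}^{k_{n}}\E e^{-T_{k}}\Bigr| \leq 4(k_{n}-1)\alpha_{\ell_{n}} \to 0.
\]
Finally, by stationarity each factor $\E e^{-T_{k}}$ differs from $\E\exp\{-\sum_{i=1}^{r_{n}} f(kr_{n}/n, X_{i}/a_{n})\}$ by at most $K\ell_{n}\PP(X_{1}>\delta a_{n})$; the elementary bound $|\prod a_{k}-\prod b_{k}|\leq \sum|a_{k}-b_{k}|$ for factors in $[0,1]$ then shows the two products differ by at most $Kk_{n}\ell_{n}\PP(X_{1}>\delta a_{n})\to 0$, closing the chain.

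The only delicate point is the joint selection of $(r_{n})$ and $(\ell_{n})$ realising all growth and mixing constraints simultaneously; once this is settled via the diagonal argument from $\alpha_{n}\to 0$, everything else reduces to elementary bookkeeping using regular variation and uniform continuity of $f$, so the main obstacle is largely notational rather than conceptual.
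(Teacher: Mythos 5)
Your proposal follows essentially the same route as the paper: the same big-block/small-block decomposition, the same three error sources (boundary/small-block terms controlled by $K k_n \ell_n \PP(X_1>\delta a_n)$ via regular variation, the time-argument substitution controlled by uniform continuity of $f$, and decoupling via the covariance inequality $|\operatorname{Cov}(U,V)|\leq 4\alpha_m$ iterated over blocks), and the bookkeeping is correct. The one point to repair is the construction of the sequences: taking $r_n$ to be the geometric mean of $\ell_n$ and $n$ does \emph{not} work for arbitrary mixing rates, since $k_n\alpha_{\ell_n}\approx\sqrt{n/\ell_n}\,\alpha_{\ell_n}\to 0$ would then force $\ell_n$ to grow at a rate comparable to $n$ when $\alpha_m$ decays very slowly, contradicting $\ell_n/r_n\to 0$. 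The fix is to let $r_n$ depend on the mixing coefficients themselves, e.g.\ $\ell_n\to\infty$ with $\ell_n=o(n^{1/8})$ and $r_n=\lfloor\max\{n\sqrt{\alpha_{\ell_n+1}},\,n^{2/3}\}\rfloor+1$ as in the paper, which gives $k_n\alpha_{\ell_n+1}\leq\sqrt{\alpha_{\ell_n+1}}\to 0$ together with $r_n/n\to 0$ and $k_n\ell_n/n\to 0$; with that substitution your argument goes through verbatim.
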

\begin{proof}
Let $(l_{n})$ be an arbitrary sequence of positive
integers such that $l_{n} \to \infty$ as $n \to \infty$ and
 $l_{n} = o(n^{1/8})$, where $b_{n}=o(c_{n})$ means $b_{n}/c_{n} \to 0$ as $n \to \infty$.
  Define, for any $n \in \mathbb{N}$,
 \begin{equation}\label{e:rn1}
 r_{n} = \lfloor \max \{ n \sqrt{ \alpha_{l_{n}+1}},\,n^{2/3} \} \rfloor
    +1.
 \end{equation}
 Then $r_{n} \to \infty$ as $n \to \infty$.
 Since the sequence $(X_{n})$ is strongly mixing,
  $\alpha_{l_{n}+1} \to 0$ as $n \to \infty$, and therefore
   $r_{n}/n \to 0$ as $n \to \infty$.
   Hence it follows that  $k_{n}
  \to \infty$ and
\begin{equation}\label{e:strmix1}
    k_{n} \alpha_{l_{n}+1} \to 0 \qquad \textrm{and} \qquad
    \frac{k_{n}l_{n}}{n} \to 0.
\end{equation}

Fix $f \in C_{K}^{+}([0,1] \times \mathbb{E})$. We have to show that
$I(n) \to 0$ as $n \to \infty$, where
$$ I(n) =  \bigg| \mathrm{E}  \bigg[ \exp \bigg\{ - \sum_{i=1}^{n} f \bigg(\frac{i}{n}, \frac{X_{i}}{a_{n}}
 \bigg) \bigg\} \bigg]
 - \prod_{k=1}^{k_{n}} \mathrm{E} \exp \bigg\{ - \sum_{i=1}^{r_{n}} f \bigg(\frac{kr_{n}}{n}, \frac{X_{i}}{a_{n}} \bigg)
 \bigg\} \bigg|.$$
 We have
\begin{eqnarray}\label{e:I}
  \nonumber I(n) & \leq & \bigg| \mathrm{E} \bigg[ \exp \bigg\{ - \sum_{i=1}^{n} f \bigg(\frac{i}{n}, \frac{X_{i}}{a_{n}}
 \bigg) \bigg\} \bigg]
 - \mathrm{E} \bigg[ \exp \bigg\{ - \sum_{i=1}^{k_{n}r_{n}} f \bigg(\frac{i}{n}, \frac{X_{i}}{a_{n}} \bigg)
 \bigg\} \bigg] \bigg| \\[0.5em]
   \nonumber & & \hspace*{-2.9em} + \ \bigg| \mathrm{E} \bigg[  \exp \bigg\{ - \sum_{i=1}^{k_{n}r_{n}} f \bigg(\frac{i}{n}, \frac{X_{i}}{a_{n}}
 \bigg) \bigg\} \bigg]
 - \mathrm{E} \bigg[ \exp \bigg\{ - \sum_{k=1}^{k_{n}} \sum_{i=(k-1)r_{n}+1}^{kr_{n}-l_{n}}
   f \bigg(\frac{i}{n}, \frac{X_{i}}{a_{n}} \bigg)
 \bigg\} \bigg] \bigg| \\[0.5em]
 \nonumber & &  \hspace*{-2.9em} + \ \bigg| \mathrm{E} \bigg[ \exp \bigg\{ - \sum_{k=1}^{k_{n}} \sum_{i=(k-1)r_{n}+1}^{kr_{n}-l_{n}}
    f \bigg(\frac{i}{n}, \frac{X_{i}}{a_{n}} \bigg) \bigg\} \bigg]
 - \prod_{k=1}^{k_{n}} \mathrm{E} \bigg[ \exp \bigg\{ - \sum_{i=1}^{r_{n}-l_{n}} f \bigg(\frac{kr_{n}}{n}, \frac{X_{i}}{a_{n}} \bigg)
 \bigg\} \bigg] \bigg| \\[0.5em]
   \nonumber & & \hspace*{-2.9em} + \ \bigg| \prod_{k=1}^{k_{n}} \mathrm{E} \bigg[ \exp \bigg\{ - \sum_{i=1}^{r_{n}-l_{n}} f \bigg(\frac{kr_{n}}{n}, \frac{X_{i}}{a_{n}}
 \bigg) \bigg\} \bigg]
 - \prod_{k=1}^{k_{n}} \mathrm{E} \bigg[ \exp \bigg\{ - \sum_{i=1}^{r_{n}} f \bigg(\frac{kr_{n}}{n}, \frac{X_{i}}{a_{n}} \bigg)
 \bigg\} \bigg] \bigg| \\[1em]
   & =: & I_{1}(n) + I_{2}(n) + I_{3}(n) + I_{4}(n)
\end{eqnarray}

 The function $f$ is nonnegative, bounded (by $M>0$ let us suppose)
 and its support is bounded away from origin, which implies that $f(s,x)=0$ for all $s \in
 [0,1]$ and $ x \in (0, \delta]$ for some $\delta >0$. Denote by
 $j_{n}=n-k_{n}r_{n}$. Then using stationarity and the inequality
 $1-e^{-x} \leq x$ for any $x \geq 0$, we obtain
 \begin{eqnarray}\label{e:I1}
   \nonumber I_{1}(n) & \leq & \mathrm{E} \bigg[ \exp \bigg\{ - \sum_{i=1}^{k_{n}r_{n}}
                                    f \bigg(\frac{i}{n}, \frac{X_{i}}{a_{n}} \bigg) \bigg\} \cdot \bigg|
                                    1 - \exp \bigg\{ - \sum_{i=k_{n}r_{n}+1}^{n}
                                    f \bigg(\frac{i}{n}, \frac{X_{i}}{a_{n}} \bigg) \bigg\} \bigg| \bigg]
                                    \\[0.5em]
   \nonumber & \leq & \mathrm{E} \bigg[ \sum_{i=k_{n}r_{n}+1}^{n} f \bigg(\frac{i}{n}, \frac{X_{i}}{a_{n}}
                   \bigg) \bigg] =   \sum_{i=k_{n}r_{n}+1}^{n} \mathrm{E} \bigg[ f \bigg(\frac{i}{n}, \frac{X_{1}}{a_{n}} \bigg)
                   1_{\big\{ \frac{|X_{1}|}{a_{n}} > \delta \big\} }
                   \bigg] \\[0.6em]
   & \leq & Mj_{n} \mathrm{P} (X_{1}>\delta a_{n}).
 \end{eqnarray}
In a similar manner we obtain
\begin{equation}\label{e:I2}
    I_{2}(n) \leq M k_{n}l_{n} \mathrm{P} (X_{1}>\delta a_{n}).
\end{equation}
Further we have
$$ I_{3}(n) \leq I_{5}(n) + I_{6}(n) + I_{7}(n),$$
where
\begin{eqnarray*}
I_{5}(n) & = & \bigg| \mathrm{E} \bigg[ \exp \bigg\{ - \sum_{k=1}^{k_{n}} \sum_{i=(k-1)r_{n}+1}^{kr_{n}-l_{n}}
      f \bigg( \frac{i}{n}, \frac{X_{i}}{a_{n}} \bigg) \bigg\} \bigg] \\[0.3em]
   & &  \hspace*{-2.2em} - \ \mathrm{E} \bigg[ \exp \bigg\{ - \sum_{i=1}^{r_{n}-l_{n}} f \bigg(\frac{i}{n}, \frac{X_{i}}{a_{n}} \bigg) \bigg\} \bigg]
     \mathrm{E} \bigg[ \exp \bigg\{ - \sum_{k=2}^{k_{n}} \sum_{i=(k-1)r_{n}+1}^{kr_{n}-l_{n}}
    f \bigg(\frac{i}{n}, \frac{X_{i}}{a_{n}} \bigg) \bigg\} \bigg] \bigg|,
\end{eqnarray*}
\begin{eqnarray*}
I_{6}(n) & = & \bigg| \mathrm{E} \bigg[ \exp \bigg\{ - \sum_{i=1}^{r_{n}-l_{n}} f \bigg(\frac{i}{n}, \frac{X_{i}}{a_{n}} \bigg) \bigg\} \bigg]
     \mathrm{E} \bigg[ \exp \bigg\{ - \sum_{k=2}^{k_{n}} \sum_{i=(k-1)r_{n}+1}^{kr_{n}-l_{n}}
     f \bigg(\frac{i}{n}, \frac{X_{i}}{a_{n}} \bigg) \bigg\} \bigg]\\[0.3em]
   & & \hspace*{-2.2em} - \ \mathrm{E} \bigg[ \exp \bigg\{ - \sum_{i=1}^{r_{n}-l_{n}} f \bigg(\frac{1 \cdot r_{n}}{n}, \frac{X_{i}}{a_{n}} \bigg) \bigg\} \bigg]
     \mathrm{E} \bigg[ \exp \bigg\{ - \sum_{k=2}^{k_{n}} \sum_{i=(k-1)r_{n}+1}^{kr_{n}-l_{n}}
     f \bigg(\frac{i}{n}, \frac{X_{i}}{a_{n}} \bigg) \bigg\} \bigg]\bigg|,
\end{eqnarray*}
and
\begin{eqnarray*}
I_{7}(n) & = & \bigg| \mathrm{E} \bigg[ \exp \bigg\{ - \sum_{i=1}^{r_{n}-l_{n}} f \bigg(\frac{1 \cdot r_{n}}{n}, \frac{X_{i}}{a_{n}} \bigg) \bigg\} 
     \mathrm{E} \bigg[ \exp \bigg\{ - \sum_{k=2}^{k_{n}} \sum_{i=(k-1)r_{n}+1}^{kr_{n}-l_{n}}
     f \bigg(\frac{i}{n}, \frac{X_{i}}{a_{n}} \bigg) \bigg\} \bigg]\\[0.3em]
  & & \hspace*{-2.2em} - \ \prod_{k=1}^{k_{n}} \mathrm{E} \bigg[ \exp \bigg\{ - \sum_{i=1}^{r_{n}-l_{n}}
      f \bigg(\frac{kr_{n}}{n}, \frac{X_{i}}{a_{n}} \bigg) \bigg\} \bigg]\bigg|.
\end{eqnarray*}
The inequality
$ | \mathrm{E} (gh) - \mathrm{E} g\,\mathrm{E} h| \leq 4C_{1}C_{2} \alpha_{m}$,
for a $\mathcal{F}_{-\infty}^{j}$ measurable function $g$ and a
$\mathcal{F}_{j+m}^{\infty}$ measurable function $h$ such that $|g|
\leq C_{1}$ and $|h| \leq C_{2}$ (see Lemma
1.2.1 in Lin and Lu~\cite{LiLu97}), gives
\begin{equation}\label{e:I5}
    I_{5}(n) \leq 4 \alpha_{l_{n}+1}.
\end{equation}
For any $t>0$ there exists a constant $C(t)>0$ such that the
following inequality holds:
$$ |1-e^{-x}| \leq C(t)|x| \qquad \textrm{for all} \ |x| \leq t.$$
Further, for arbitrary real numbers $z_{1}, \ldots, z_{n}$ and $w_{1}, \ldots, w_{n}$ it holds that
\begin{equation}\label{eq:durrett}
\bigg| \prod_{k=1}^{n}z_{k} - \prod_{k=1}^{n}w_{k} \bigg| \leq A^{n-1} \sum_{k=1}^{n}|z_{k}-w_{k}|
\end{equation}
where $A= \max \{|z_{1}|,\ldots, |z_{n}|, |w_{1}|,\ldots, |w_{n}| \}$.
These last two inequalities imply
\begin{eqnarray*}
  I_{6}(n) & \leq & \mathrm{E} \bigg| \exp \bigg\{ - \sum_{i=1}^{r_{n}-l_{n}}
       f \bigg(\frac{i}{n}, \frac{X_{i}}{a_{n}} \bigg) \bigg\} - \exp \bigg\{ - \sum_{i=1}^{r_{n}-l_{n}}
       f \bigg(\frac{r_{n}}{n}, \frac{X_{i}}{a_{n}} \bigg) \bigg\} \bigg|
       \\[0.3em]
   & \leq &  \sum_{i=1}^{r_{n}-l_{n}} \mathrm{E} \bigg| \exp \bigg\{ -f \bigg(\frac{i}{n}, \frac{X_{i}}{a_{n}} \bigg) \bigg\}
       - \exp \bigg\{ -f \bigg(\frac{r_{n}}{n}, \frac{X_{i}}{a_{n}} \bigg) \bigg\}
       \bigg|\\[0.3em]
   & \leq & \sum_{i=1}^{r_{n}-l_{n}} \mathrm{E} \bigg| 1
       - \exp \bigg\{ f \bigg(\frac{i}{n}, \frac{X_{i}}{a_{n}} \bigg) -
       f \bigg(\frac{r_{n}}{n}, \frac{X_{i}}{a_{n}} \bigg) \bigg\}
       \bigg|\\[0.3em]
   & \leq & C(2M) \sum_{i=1}^{r_{n}-l_{n}} \mathrm{E} \bigg| f \bigg(\frac{i}{n}, \frac{X_{i}}{a_{n}} \bigg) -
       f \bigg(\frac{r_{n}}{n}, \frac{X_{i}}{a_{n}} \bigg)
       \bigg|.
\end{eqnarray*}
Therefore
\begin{eqnarray*}
  I_{6}(n) & \leq & C(2M) \sum_{i=1}^{r_{n}-l_{n}} \mathrm{E} \bigg[ \bigg| f \bigg(\frac{i}{n}, \frac{X_{i}}{a_{n}} \bigg) -
       f \bigg(\frac{r_{n}}{n}, \frac{X_{i}}{a_{n}} \bigg)
       \bigg| 1_{ \big\{ \frac{X_{i}}{a_{n}} > \delta \big\} } \bigg].
\end{eqnarray*}
Since a continuous function on a compact set is uniformly
continuous, it follows that for any $\epsilon >0$ there exists
$\gamma >0$ such that for $(s,x), (s',x') \in [0,1] \times \{y \in
\mathbb{E} : y > \delta \}$, if $d_{[0,1] \times
\mathbb{E}}((s,x),(s',x')) < \gamma$ then $|f(s,x) - f(s',x')|<
\epsilon$, where by $d_{[0,1] \times \mathbb{E}}$ we denoted the
metric on the direct product of metric spaces $[0,1]$ and
$\mathbb{E}$, i.e.
$ d_{[0,1] \times \mathbb{E}}((s,x),(s',x')) = \max \{ |s-s'|,
\rho(x,x') \}.$ Since $r_{n}/n \to 0$ as $n \to \infty$,
for large $n$ we have
$$ d_{[0,1] \times \mathbb{E}} \bigg( \bigg(\frac{i}{n}, \frac{X_{i}}{a_{n}}\bigg),
  \bigg(\frac{r_{n}}{n}, \frac{X_{i}}{a_{n}}\bigg) \bigg) = \frac{|i
   -r_{n}|}{n} \leq \frac{r_{n}}{n} < \gamma,$$
for any $i=1, \ldots , r_{n}-l_{n}$. Therefore, for large $n$,
$$ \bigg| f \bigg(\frac{i}{n}, \frac{X_{i}}{a_{n}} \bigg) -
       f \bigg(\frac{r_{n}}{n}, \frac{X_{i}}{a_{n}} \bigg)
       \bigg| < \epsilon,$$ and this implies
\begin{equation}\label{e:I6}
  I_{6}(n) \leq  \epsilon\,C(2M) (r_{n}-l_{n}) \mathrm{P} ( X_{1} > \delta
  a_{n}) \qquad \textrm{for large} \ n.
\end{equation}
Taking into account relations (\ref{e:I5}) and (\ref{e:I6}), it
follows that, for large $n$,
$$ I_{3}(n) \leq 4 \alpha_{l_{n}+1} + \epsilon\,C(2M) r_{n}
\mathrm{P}(X_{1} > \delta a_{n}) + I_{7}(n),$$
 and since it is easy to obtain
\begin{eqnarray*}
 I_{7}(n) & \leq & \bigg|
     \mathrm{E} \bigg[ \exp \bigg\{ - \sum_{k=2}^{k_{n}} \sum_{i=(k-1)r_{n}+1}^{kr_{n}-l_{n}}
     f \bigg(\frac{i}{n}, \frac{X_{i}}{a_{n}} \bigg) \bigg\} \bigg]\\[0.5em]
      & &
-\,\prod_{k=2}^{k_{n}} \mathrm{E}
      \bigg[ \exp \bigg\{ - \sum_{i=1}^{r_{n}-l_{n}}
      f \bigg(\frac{kr_{n}}{n}, \frac{X_{i}}{a_{n}} \bigg) \bigg\} \bigg]
      \bigg|,
\end{eqnarray*}
we recursively obtain (we repeat the same procedure for $I_{7}(n)$
as we did for $I_{3}(n)$ and so on)
\begin{equation}\label{e:I3}
    I_{3}(n) \leq 4 k_{n} \alpha_{l_{n}+1} + \epsilon\,C(2M)
    k_{n}r_{n} \mathrm{P} (X_{1}>\delta a_{n}).
\end{equation}
Stationarity and (\ref{eq:durrett}) imply
\begin{equation}\label{e:I4}
    I_{4}(n) \leq M k_{n}l_{n} \mathrm{P} (X_{1} >\delta a_{n}).
\end{equation}
Thus from (\ref{e:I}), (\ref{e:I1}), (\ref{e:I2}),
(\ref{e:I3}) and (\ref{e:I4}) it follows that for large $n$,
\begin{eqnarray*}
  I(n) & \leq & \bigg( M \frac{j_{n}}{n} + 2M \frac{k_{n}l_{n}}{n} +
         \epsilon\,C(2M) \frac{k_{n}r_{n}}{n} \bigg) \cdot n \mathrm{P}(X_{1}>
         a_{n}) \cdot \frac{\mathrm{P}(X_{1}> \delta a_{n})}{\mathrm{P}(X_{1}> a_{n})}
         \\[0.5em]
   & & + \ 4 k_{n} \alpha_{l_{n}+1}.
\end{eqnarray*}
 Since $X_{1}$ is regularly varying, it holds
 that
 $ \mathrm{P}(X_{1}> \delta a_{n})\,/\,\mathrm{P}(X_{1}> a_{n})
 \rightarrow \delta^{-\alpha},$
 as $n \rightarrow \infty$.
 This together with relation
  (\ref{e:strmix1}), and the fact that $j_{n}/n \rightarrow 0$, $k_{n}r_{n}/n \rightarrow 1$ and
  $n \mathrm{P}(X_{1}>a_{n}) \to 1$ as
 $n \rightarrow \infty$, imply
$$ \limsup_{n \rightarrow \infty} I(n) \leq \epsilon\,C(2M)
    \delta^{-\alpha}.$$
But since this holds for all $\epsilon >0$, we get $ \lim_{n
\rightarrow \infty} I(n) = 0$, and thus Condition~\ref{c:mixcond1}
 holds.\qed
\end{proof}

Under the finite-cluster Condition~\ref{c:mixcond2} the following value
\begin{equation}\label{e:theta}
   \theta  = \lim_{r \to \infty} \lim_{x \to \infty} \PP \biggl(\max_{1 \leq i \leq r}X_{i} \leq x \, \bigg| \, X_{0}>x \biggr)
\end{equation}
is strictly positive, and it is equal to the extremal index of the sequence $(X_{n})$ (see Basrak and Segers~\cite{BaSe}). For a definition and some discussion about the extremal index we refer to Leadbetter and Rootz\'{e}n~\cite{LeRo88}, page 439.

\begin{proposition}\label{p:mixcond2jrv}
 Let $(X_{n})$ be a strictly stationary sequence of nonnegative regularly varying random variables with index $\alpha>0$. If $(X_{n})$ is strongly mixing and has extremal index $\theta=1$, then:
 \begin{itemize}
   \item[(i)] Condition~\ref{c:mixcond2} holds.
   \item[(ii)] The sequence $(X_{n})$ is jointly regularly varying with index $\alpha$. Further, for the tail process $(Y_{n})$ of $(X_{n})$ it holds that $Y_{k}=0$ for all $k \neq 0$.
   \item[(iii)] The following point process convergence holds
    \begin{equation}\label{e:Nnconv}
N_{n} \bigg|_{[0, 1] \times \EE_u} \dto N^{(u)}
    = \sum_i \delta_{(T^{(u)}_i, u Y_{0})} \bigg|_{[0, 1] \times \EE_u}\,
\end{equation}
in $[0, 1] \times \EE_u$ for every $u \in (0,\infty)$, where $\sum_i \delta_{T^{(u)}_i}$ is a
homogeneous Poisson process on $[0, 1]$ with intensity $u^{-\alpha}$.
 \end{itemize}
\end{proposition}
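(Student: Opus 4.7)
My plan is to handle (i), (ii), and (iii) in that order, with the whole argument driven by the observation that when $\theta=1$ the monotone quantity $\theta_r:=\lim_{x\to\infty}\PP(\max_{1\le i\le r}X_i\le x\mid X_0>x)$ equals $1$ for every fixed $r$ (it is nonincreasing in $r$ and converges upward to $1$), so $\lim_{x\to\infty}\PP(X_k>x\mid X_0>x)=0$ for every $k\ge 1$. This ``seed estimate'' is the input to all three parts.

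For (ii) the main step is to show $\PP(X_k>\epsilon x\mid X_0>x)\to 0$ for every fixed $k\ne 0$ and every $\epsilon>0$. For $0<\epsilon\le 1$ I would use the inclusion $\{X_0>x\}\subset\{X_0>\epsilon x\}$ to bound
\[
\PP(X_k>\epsilon x\mid X_0>x)\le \PP(X_k>\epsilon x\mid X_0>\epsilon x)\cdot\frac{\PP(X_0>\epsilon x)}{\PP(X_0>x)},
\]
whose first factor vanishes by the seed estimate applied at $y=\epsilon x$ and whose second factor tends to $\epsilon^{-\alpha}$ by regular variation of $X_1$; the case $\epsilon>1$ is immediate. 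The negative-index case $k<0$ reduces to the positive one via $(X_{-|k|},X_0)\eqd(X_0,X_{|k|})$ and the same dichotomy on $\epsilon$. Combined with the Pareto limit $\PP(X_0>yx\mid X_0>x)\to y^{-\alpha}$ for $y\ge 1$ and a finite union bound, this yields fidi convergence of $(x^{-1}X_i)_{|i|\le k}$ given $X_0>x$ to the vector with $Y_0$ Pareto at coordinate $0$ and zeros elsewhere, so Theorem~2.1 of Basrak and Segers~\cite{BaSe} delivers joint regular variation with the stated trivial tail process.

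For (i) I would take $(r_n)$ as in~\eqref{e:rn1} and, for each fixed $u>0$, construct by a diagonal argument an auxiliary sequence $l_n=l_n^{(u)}$ with $l_n\to\infty$, $l_n/r_n\to 0$, $n\alpha_{l_n}\to 0$, and $\sum_{k=1}^{l_n}\PP(X_k>ua_n\mid X_0>ua_n)\to 0$; the last is feasible because the analogous sum up to any \emph{fixed} $r$ vanishes as $n\to\infty$ by the seed estimate. I would then split the event in~\eqref{e:anticluster} into a \emph{far} part $l_n<|i|\le r_n$ and a \emph{near} part $|i|\le l_n$. On the far range, strong mixing applied (via stationarity) to $\sigma(X_0)$ and to the left/right pieces of $\{X_i:|i|>l_n\}$ gives
\[
\PP(\max_{l_n<|i|\le r_n}X_i>ua_n\mid X_0>ua_n)\le 2r_n\PP(X_1>ua_n)+\frac{2\alpha_{l_n+1}}{\PP(X_0>ua_n)}\longrightarrow 0.
\]
On the near range, the positive side is handled by a direct union bound in $i$, which is $\sum_{k=1}^{l_n}\PP(X_k>ua_n\mid X_0>ua_n)\to 0$. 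For the negative side I would use stationarity shifted by $+l_n$ to rewrite $\PP(\max_{-l_n\le i\le -1}X_i>x,\,X_0>x)$ as $\PP(\max_{0\le j\le l_n-1}X_j>x,\,X_{l_n}>x)$, and a second union bound combined with $\PP(X_j>x,X_{l_n}>x)=\PP(X_{l_n-j}>x\mid X_0>x)\PP(X_0>x)$ produces the same tail sum. Adding the pieces, the conditional probability vanishes as $n\to\infty$ already for $m=1$, which trivially gives~(i).

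With (ii), Proposition~\ref{p:mixcond1}, and (i) in place, part~(iii) is an application of the point-process limit theorem of Basrak et al.~\cite{BKS}: inserting the trivial tail process $Y_n=\mathbf{1}_{\{n=0\}}Y_0$ into their limit description collapses every would-be cluster to a single atom, so the limit reduces to the marked Poisson process $N^{(u)}$ of~\eqref{e:Nnconv} with atoms $(T_i^{(u)},uY_{0,i})$, the $Y_{0,i}$ being independent copies of $Y_0$. The main obstacle I expect is the near-negative piece of (i): the extremal index is defined one-sidedly, so converting $\theta=1$ into control of $\PP(\max_{-l_n\le i\le -1}X_i>x\mid X_0>x)$ with $l_n\to\infty$ forces the shift-plus-double-union-bound reduction above, together with a diagonal choice of $l_n$ that simultaneously dominates the mixing coefficients and the conditional tail sums.
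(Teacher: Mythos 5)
Your parts (ii) and (iii) are essentially sound (modulo the ``seed estimate''), but part (i) contains a concrete gap that I do not see how to repair within your framework. On the far range you bound
\[
\PP\Bigl(\max_{l_n<|i|\le r_n}X_i>ua_n\,\Big|\,X_0>ua_n\Bigr)\le 2r_n\PP(X_1>ua_n)+\frac{2\alpha_{l_n+1}}{\PP(X_0>ua_n)},
\]
and since $n\PP(X_0>ua_n)\to u^{-\alpha}$, the second term tends to $0$ only if $n\alpha_{l_n}\to 0$. You claim such an $l_n$ (with $l_n\to\infty$, $l_n=o(r_n)$) can be produced by a diagonal argument, but diagonalization only works for quantities that vanish in $n$ for each \emph{fixed} value of the auxiliary index, whereas here $n\alpha_m\to\infty$ as $n\to\infty$ for every fixed $m$ with $\alpha_m>0$. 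Concretely, if $\alpha_m\sim 1/\log m$ the sequence is strongly mixing, yet $n\alpha_{l_n}\to 0$ would force $l_n\gg e^{n}$, incompatible with $l_n=o(r_n)=o(n)$. The underlying problem is that you condition on a single event of probability of order $1/n$, so the additive mixing error must be $o(1/n)$ at a lag that is still $o(n)$ --- a rate assumption on the mixing coefficients, not a consequence of strong mixing. The paper avoids this entirely by invoking O'Brien's results (Theorem 2.1 and Proposition 5.1 in \cite{O'B87}): the blocking there only requires an error of the form $(n/p_n)\alpha_{q_n}\to 0$, which is achievable for \emph{any} $\alpha_m\to 0$ by taking $p_n\ge n\sqrt{\alpha_{q_n}}$, and combined with $\theta=1$ it yields $\PP(M_{p_n}>ua_n\mid X_0>ua_n)\to 0$ for the growing block directly, with no near/far splitting.

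A second, related issue: your seed estimate rests on identifying the extremal index with $\lim_{r}\lim_{x}\PP(\max_{1\le i\le r}X_i\le x\mid X_0>x)$ (and on the inner limits existing at all). That identification is itself a theorem: under strong mixing it is precisely what O'Brien's theorem delivers, while the identification via \eqref{e:theta} quoted in the paper is stated under Condition~\ref{c:mixcond2}, so using it would be circular since Condition~\ref{c:mixcond2} is what you are proving. Once you invoke O'Brien to justify the seed, you already possess the growing-block statement \eqref{e:forjrv} and hence (i) in full, which is exactly the paper's route; your bootstrapping from fixed $k$ to blocks of length $l_n$ then becomes unnecessary (and, as explained above, does not go through as written). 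Your derivation of (ii) from the seed estimate, the reduction of negative indices by stationarity, and the application of the Basrak--Krizmani\'{c}--Segers point process theorem in (iii) all match the paper and are correct.
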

\begin{proof}
(i) Let $(q_{n})$ be any sequence of positive integers such that $q_{n} \to \infty$ and $q_{n} = o(n)$.
Fix an arbitrary $u>0$ and put
\begin{equation}\label{e:rn2}
p_{n}= \max \{ \floor{n \sqrt{\alpha_{q_{n}}}}, \floor{ \sqrt{n q_{n}}} +1 \},
\end{equation}
 where $(\alpha_{n})$ is the sequence of $\alpha$--mixing coefficients of $(X_{n})$. From Theorem 2.1 and Proposition 5.1 in O'Brien~\cite{O'B87} we derive that, as $n \to \infty$,
 \begin{equation}\label{e:O'Brien1}
 \PP ( M_{n} \leq ua_{n} ) - [\PP (X_{0} \leq ua_{n})]^{t_{n}} \to 0,
 \end{equation}
where $t_{n} = n \PP ( M_{p_{n}} \leq ua_{n}\,|\,X_{0}> ua_{n} )$.

Let $(\widehat{X}_{n})$ be the associated independent sequence of $(X_{n})$, i.e. $(\widehat{X}_{n})$ is an i.i.d. sequence with $\widehat{X}_{1} \eqd X_{1}$, and let $\widehat{M}_{n} = \max \{\widehat{X}_{i} : i=1,\ldots, n \}$. Then by Theorem 2.2.1 in Leadbetter and Rootz\'{e}n~\cite{LeRo88}
\begin{equation}\label{e:LRpower}
 \PP ( M_{n} \leq u a_{n} ) \to G^{\theta}(u) \qquad \textrm{as} \ n \to \infty,
\end{equation}
where
$$ G(u) = \lim_{n \to \infty} \PP ( \widehat{M}_{n} \leq u a_{n} )  = \lim_{n \to \infty} [ \PP (\widehat{X}_{0} \leq ua_{n}) ]^{n} = e^{-u^{-\alpha}}.$$
Since $\theta=1$, from (\ref{e:LRpower}) we obtain
\begin{equation}\label{e:LRpower2}
\PP ( M_{n} \leq u a_{n}) \to e^{-u^{-\alpha}} \qquad \textrm{as} \ n \to \infty.
\end{equation}
Therefore, from (\ref{e:O'Brien1}) and (\ref{e:LRpower2}) we obtain, as $n \to \infty$,
$$ \PP ( M_{p_{n}} \leq ua_{n}\,|\,X_{0}> ua_{n} ) \cdot \ln [ \PP(X_{0} \leq u a_{n} ) ]^{n} \to - u^{-\alpha},$$
and since $\lim_{n \to \infty} [ \PP (X_{0} \leq ua_{n})]^{n} = e^{-u^{-\alpha}}$, it follows that
\begin{equation}\label{e:forjrv}
 \PP ( M_{p_{n}} > ua_{n}\,|\,X_{0} > ua_{n} ) \to 0 \qquad \textrm{as} \ n \to \infty.
\end{equation}
From this, putting $r_{n} :=p_{n}$, we deduce that
\begin{equation*}
\label{e:anticlusterpos}
  \lim_{m \to \infty} \limsup_{n \to \infty}
  \PP \biggl( \max_{m \leq i \leq r_{n}} X_{i} > ua_{n}\,\bigg|\,X_{0}>ua_{n} \biggr) = 0.
\end{equation*}
One similarly deals with negative indices, and hence we finally conclude
\begin{equation*}
  \lim_{m \to \infty} \limsup_{n \to \infty}
  \PP \biggl( \max_{m \leq |i| \leq r_{n}} X_{i} > ua_{n}\,\bigg|\,X_{0}>ua_{n} \biggr) = 0.
\end{equation*}

(ii) From relation (\ref{e:forjrv}) one straightforward obtains that for all $k \neq 0$ and $r \in (0,1)$
$$ \lim_{n \to \infty} \PP(X_{k} > r a_{n}\,|\,X_{0} > a_{n}) = 0,$$
which implies $P(Y_{k} > r) = 0$, i.e. $Y_{k}=0$. As for $Y_{0}$, from (\ref{e:tailprocess}) and the regular variation property of $X_{0}$ we immediately obtain
$\PP(Y_{0} > y)=y^{-\alpha}$ for $y \geq 1$. These suffices to conclude that $(X_{n})$ is jointly regularly varying.

(iii) Since $(X_{n})$ is jointly regularly varying and Conditions \ref{c:mixcond1} and~\ref{c:mixcond2} hold (note that Condition \ref{c:mixcond1} holds by Proposition \ref{p:mixcond1}), by Theorem 2.3 in Basrak and Segers~\cite{BKS}, for every $u \in (0,\infty)$ and as $n \to \infty$,
\begin{equation*}
N_{n} \bigg|_{[0, 1] \times \EE_u} \dto N^{(u)}
    = \sum_i \sum_j \delta_{(T^{(u)}_i, u Z_{ij})} \bigg|_{[0, 1] \times \EE_u}\,
\end{equation*}
in $[0, 1] \times \EE_u$, where $(\sum_j \delta_{Z_{ij}})_i$ is an i.i.d.\
sequence of point processes in $\EE$, independent of $\sum_i
\delta_{T^{(u)}_i}$, and with common distribution equal to the
distribution of
$$\biggl( \sum_{n \in \mathbb{Z}} \delta_{Y_n} \, \bigg| \, \sup_{i \leq -1} Y_i \leq 1 \biggr).$$
From this, since $Y_{k}=0$ for $k \neq 0$, we immediately obtain (\ref{e:Nnconv}).
\qed
\end{proof}

\section{Functional $J_{1}$ convergence of partial maxima processes}\label{s:two}

Let $(X_n)$ be a strongly mixing and strictly stationary sequence of nonnegative regularly varying random variables with index $\alpha >0$.
In this section we show the convergence of the partial maxima
processes $M_n(\,\cdot\,)$ to an extremal process in the space $D[0, 1]$
equipped with the Skorohod $J_1$ topology when there is no clustering of large values in the sequence $(X_{n})$. Similar to the case of
partial sum processes in Basrak and Segers~\cite{BKS} we first represent
the partial maxima process $M_n(\,\cdot\,)$ as the image of the
time-space point process $N_{n}\,|\,_{[0,1] \times \EE_{u}}$ under a certain maximum
functional. Then, using certain continuity properties of this
functional, the continuous mapping theorem and the standard
"finite dimensional convergence plus tightness" procedure we
transfer the weak convergence of $N_{n}\,|\,_{[0,1] \times \EE_{u}}$ in to
weak convergence of $M_n(\,\cdot\,)$.

Extremal processes can be defined by Poisson processes in the following way. Let $\xi = \sum_{k}\delta_{(t_{k}, j_{k})}$ be a Poisson process on $(0,\infty) \times \mathbb{E}$ with mean measure $\lambda \times \nu$, where $\lambda$ is the Lebesgue measure. The extremal process $\widetilde{M}(\,\cdot\,)$ generated by $\xi$ is defined by
$ \widetilde{M}(t) = \sup \{ j_{k} : t_{k} \leq t\},\,t>0.$
The distribution function of $\widetilde{M}(t)$ is of the form
$$ \PP ( \widetilde{M}(t) \leq x) = e^{-t \nu(x,\infty)}$$
for $t>0$ (cf. Resnick~\cite{Re86}). The measure $\nu$ is called the exponent measure.

Fix $0 < v < u < \infty$. Define the maximum functional
$
  \phi^{(u)} \colon \mathbf{M}_{p}([0,1] \times \EE_{v}) \to
  D[0,1]
$
 by
 $$ \phi^{(u)} \Big( \sum_{i}\delta_{(t_{i},\,x_{i})} \Big) (t)
  =  \bigvee_{t_{i} \leq t} x_{i} \,1_{\{u < x_i < \infty\}}, \qquad t \in [0,
  1],$$
  where the supremum of an empty set may be taken, for
  convenience, to be $0$.
The space $\mathbf{M}_p([0,1] \times \EE_{v})$ of Radon point
measures on $[0,1] \times \EE_{v}$ is equipped with the vague
topology and $D[0, 1]$ is equipped with the $J_1$ topology. Let
$\Lambda = \Lambda_{1} \cap \Lambda_{2}$ where
\begin{eqnarray*}
  \Lambda_{1} &=& \{ \eta \in \mathbf{M}_{p}([0,1] \times \EE_{v}) :
    \eta ( \{0,1 \} \times \EE_{u}) = \eta ([0,1] \times \{u, \infty \})=0 \}, \\[0.3em]
  \Lambda_{2} &=& \{ \eta \in \mathbf{M}_{p}([0,1] \times \EE_{v}) :
    \eta ( \{ t \} \times \EE_{v}) \leq 1 \
   \text{for all $t \in [0,1]$} \}.
\end{eqnarray*}
 Then the point process $N^{(v)}$ defined in
(\ref{e:Nnconv}) almost surely belongs to the set $\Lambda$, see
Lemma 3.1 in Basrak et al.~\cite{BKS}. With similar arguments as in Resnick~\cite{Re07}, pages 224--226, one obtains the following lemma.

\begin{lemma}\label{l:contfunct}
The maximum functional $\phi^{(u)} \colon \mathbf{M}_{p}([0,1]
\times \EE_{v}) \to D[0,1]$ is continuous on the set $\Lambda$,
when $D[0,1]$ is endowed with the Skorohod $J_{1}$ topology.
\end{lemma}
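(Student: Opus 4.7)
The plan is to reduce the statement to elementary bookkeeping once the finitely many atoms of $\eta$ above level $u$ have been enumerated, and then to exhibit an explicit time change realising the required $J_{1}$ approximation.

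Suppose $\eta_n \vto \eta$ in $\mathbf{M}_p([0,1]\times \mathbb{E}_v)$ with $\eta \in \Lambda$. Since $[0,1]\times[u,\infty]$ is compact in $[0,1]\times \mathbb{E}_v$, the restriction $\eta|_{[0,1]\times\mathbb{E}_u}$ is a finite sum of Dirac masses $\sum_{i=1}^{k}\delta_{(t_i,x_i)}$; by $\Lambda_2$ the $t_i$ are distinct and by $\Lambda_1$ we have $0<t_i<1$ and $u<x_i<\infty$, so we may arrange $0<t_1<\cdots<t_k<1$. I would next choose $u''\in (v,u)$ so close to $u$ that $\eta([0,1]\times(u'',u])=0$ and $\eta([0,1]\times\{u''\})=0$; this is possible because $\eta$ has only finitely many atoms in $[0,1]\times[w,\infty]$ for each $w>v$ (Radon property) and none of them lies on the line of height $u$ (by $\Lambda_1$). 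The compact set $K := [0,1]\times[u'',\infty]$ then satisfies $\eta(\partial K)=0$, so by the standard characterisation of vague convergence of point measures on a continuity set, $\eta_n(K)=k$ for large $n$ and the $k$ atoms of $\eta_n$ in $K$ converge to $(t_1,x_1),\ldots,(t_k,x_k)$. Relabelling, one obtains atoms $(t_i^{(n)},x_i^{(n)}) \to (t_i,x_i)$ with $0<t_1^{(n)}<\cdots<t_k^{(n)}<1$ and $x_i^{(n)}>u$ for large $n$; since every atom of $\eta_n$ of height $>u$ must lie in $K$, these exhaust the atoms contributing to $\phi^{(u)}(\eta_n)$.

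I would then define $\lambda_n\colon [0,1]\to[0,1]$ as the piecewise-linear homeomorphism sending $0\mapsto 0$, $t_i\mapsto t_i^{(n)}$, $1\mapsto 1$. Then $\|\lambda_n-\mathrm{id}\|_\infty = \max_i|t_i^{(n)}-t_i|\to 0$, and on each subinterval $[t_{i-1},t_i)$ (with $t_0:=0$, $t_{k+1}:=1$) both $\phi^{(u)}(\eta)$ and $\phi^{(u)}(\eta_n)\circ\lambda_n$ are constant, taking the values $\bigvee_{j<i}x_j$ and $\bigvee_{j<i}x_j^{(n)}$ respectively (empty joins equal $0$). The supremum over $t$ of the absolute difference is therefore a finite maximum over $i$ of $|\bigvee_{j<i}x_j^{(n)} - \bigvee_{j<i}x_j|$, which tends to $0$ by the pointwise convergence $x_j^{(n)}\to x_j$. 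This verifies the definition of $J_1$ convergence $\phi^{(u)}(\eta_n)\to\phi^{(u)}(\eta)$, hence continuity of $\phi^{(u)}$ at $\eta$.

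The principal obstacle is to ensure nothing ``leaks'' through the level $u$: one must guarantee that, for large $n$, exactly the $k$ expected atoms of $\eta_n$ sit above $u$, none appears in the sliver $(u'',u]$, and none of the $t_i^{(n)}$ crowds the endpoints $0$ or $1$. These three points are precisely what the two defining properties of $\Lambda_1$ buy via the continuity-set form of vague convergence; $\Lambda_2$ enters only through the distinctness of the $t_i$, which is what makes $\lambda_n$ a well-defined increasing piecewise-linear bijection.
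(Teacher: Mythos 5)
Your proof is correct and follows essentially the same route as the paper, which does not spell the argument out but delegates it to Resnick (pages 224--226): enumerate the finitely many atoms above level $u$, use vague convergence on a continuity set $[0,1]\times[u'',\infty]$ to match atoms of $\eta_n$ with those of $\eta$, and build the piecewise-linear time change from the (distinct, interior) atom times. Your closing paragraph correctly identifies the exact roles of $\Lambda_1$ and $\Lambda_2$, so nothing further is needed.
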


Now we are ready to prove the functional $J_{1}$ convergence of partial maxima processes.

\begin{theorem}\label{t:functconvergence}
Let $(X_{n})$ be a strictly stationary sequence of nonnegative regularly varying
random variables with index $\alpha >0$. Suppose the sequence $(X_{n})$ is strongly mixing and has extremal index $\theta=1$. Then the partial maxima stochastic process
$$  M_{n}(t) = \bigvee_{i=1}^{\lfloor nt
   \rfloor}\frac{X_{i}}{a_{n}}, \qquad t \in [0,1],$$
satisfies
$ M_{n}(\,\cdot\,) \dto \widetilde{M}(\,\cdot\,)$ as $n \to \infty,$
in $D[0,1]$ endowed with the $J_{1}$ topology, where
$\widetilde{M}(\,\cdot\,)$ is an extremal process with exponent
measure $\nu(x,\infty) = x^{-\alpha}$, $x>0$.
\end{theorem}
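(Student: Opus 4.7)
The plan is to push the point process convergence in Proposition~\ref{p:mixcond2jrv}(iii) through the maximum functional $\phi^{(u)}$ and then let the truncation level $u\downarrow 0$, mirroring the strategy employed for partial sums in Basrak et al.~\cite{BKS}.

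Fix $0<v<u$ and set $M_{n}^{(u)}(t)=\bigvee_{i=1}^{\lfloor nt \rfloor}(X_{i}/a_{n})\,1_{\{X_{i}/a_{n}>u\}}$ for $t\in[0,1]$, so that $M_{n}^{(u)}=\phi^{(u)}(N_{n}|_{[0,1]\times \EE_{v}})$. Proposition~\ref{p:mixcond2jrv}(iii) applied at level $v$ gives $N_{n}|_{[0,1]\times\EE_{v}} \dto N^{(v)}$ vaguely, and since $N^{(v)}\in\Lambda$ almost surely, Lemma~\ref{l:contfunct} together with the continuous mapping theorem yields
$$ M_{n}^{(u)}(\,\cdot\,) \dto \widetilde{M}^{(u)}(\,\cdot\,) := \phi^{(u)}\bigl(N^{(v)}\bigr) \qquad \text{in } (D[0,1], d_{J_{1}}). $$
Writing $N^{(v)}=\sum_{i}\delta_{(T_{i}^{(v)},\,vY_{0}^{(i)})}$ with the $T_{i}^{(v)}$ forming a homogeneous Poisson process of intensity $v^{-\alpha}$ on $[0,1]$ and $(Y_{0}^{(i)})$ i.i.d.\ copies of $Y_{0}$ independent of the times, a direct mean-measure computation based on $\PP(Y_{0}>y)=y^{-\alpha}$ for $y\geq 1$ shows that the atoms of $N^{(v)}$ with mark exceeding $u$ form a Poisson process on $[0,1]\times(u,\infty]$ with mean measure $dt\otimes \alpha x^{-\alpha-1}dx$. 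Hence $\widetilde{M}^{(u)}$ is the truncation at height $u$ of the extremal process $\widetilde{M}$ with exponent measure $\nu(x,\infty)=x^{-\alpha}$; in particular the limit is independent of the auxiliary parameter $v\in(0,u)$.

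The final step is to let $u\downarrow 0$ by a converging-together argument. The key input is the pathwise bound $\sup_{t\in[0,1]}|M_{n}(t)-M_{n}^{(u)}(t)|\leq u$: if $M_{n}(t)>u$ the running maximum is attained by some $X_{i}/a_{n}>u$ that also contributes to $M_{n}^{(u)}(t)$, so the two agree; otherwise $M_{n}^{(u)}(t)=0$ and the difference equals $M_{n}(t)\leq u$. The same reasoning applied to the Poisson atoms driving $\widetilde{M}$ gives $\sup_{t}|\widetilde{M}(t)-\widetilde{M}^{(u)}(t)|\leq u$ almost surely. Since $d_{J_{1}}$ is dominated by the uniform metric, both bounds transfer to $d_{J_{1}}$, and a standard approximation lemma (convergence in $n$ for each $u$ together with approximation in $u$) delivers the desired conclusion $M_{n}(\,\cdot\,)\dto \widetilde{M}(\,\cdot\,)$ in $(D[0,1],d_{J_{1}})$. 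I do not foresee a serious obstacle: Proposition~\ref{p:mixcond2jrv} and Lemma~\ref{l:contfunct} carry the bulk of the probabilistic work, and the remaining truncation estimate is a short pathwise case analysis.
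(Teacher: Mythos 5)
Your argument is correct, and its skeleton coincides with the paper's: truncate at level $u$, push the point process convergence of Proposition~\ref{p:mixcond2jrv}(iii) through $\phi^{(u)}$ via Lemma~\ref{l:contfunct} and the continuous mapping theorem, identify the limit as the truncated extremal process, and finish with a converging-together argument. Where you genuinely diverge is in how the two de-truncation estimates are handled. For the convergence $\widetilde{M}^{(u)}\dto\widetilde{M}$ as $u\to 0$, the paper proves convergence of finite-dimensional distributions and then establishes $J_{1}$-tightness by bounding the oscillation modulus $\omega_{\delta}'$ through an explicit computation with the Poisson representation (the probability that two marks above $\epsilon$ have times within $\delta$ of each other); you instead realize all levels on a single Poisson process with mean measure $dt\otimes\alpha x^{-\alpha-1}dx$ and exploit the pathwise bound $\sup_{t}|\widetilde{M}(t)-\widetilde{M}^{(u)}(t)|\leq u$, which gives almost sure $J_{1}$ convergence directly since $d_{J_{1}}$ is dominated by the uniform metric. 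Likewise, for $\lim_{u\to 0}\limsup_{n}\PP(d_{J_{1}}(M_{n},M_{n}^{(u)})>\epsilon)=0$ the paper defers to the proof of Theorem 4.1 in Krizmani\'{c}~\cite{Kr14}, whereas you make the step explicit with the elementary case analysis showing $\sup_{t}|M_{n}(t)-M_{n}^{(u)}(t)|\leq u$, so the probability is zero once $u<\epsilon$. Your route is more self-contained and shorter for this specific problem, precisely because for maxima (unlike partial sums) the truncation error is deterministically bounded by the truncation level; the paper's tightness computation is the version of the argument that survives in settings where no such uniform pathwise bound is available. Both are valid proofs of Theorem~\ref{t:functconvergence}.
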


\begin{remark}
The statement of Theorem~\ref{t:functconvergence} is very similar to the statement of Theorem 4.1 in Krizmani\'{c}~\cite{Kr14}, with the difference that in the case treated in Krizmani\'{c}~\cite{Kr14} there is no restriction on the extremal index and the convergence takes place in $D[0,1]$ with the $M_{1}$ topology. The restriction on the extremal index (i.e. $\theta=1$) in Theorem~\ref{t:functconvergence} allows us to obtain the convergence of the partial maxima process in the stronger $J_{1}$ topology. Since the proof of Theorem~\ref{t:functconvergence} follows closely the lines of the proof of Theorem 4.1 in Krizmani\'{c}~\cite{Kr14} we will omit those parts that are identical. The only differences that occur are those arguments that use the notion of the $J_{1}$ instead of the $M_{1}$ topology, and we will describe them in the following proof.
\end{remark}

\begin{remark}
In the proof below we will use the convergence result (\ref{e:Nnconv}) for point processes $N_{n}$. For this Conditions~\ref{c:mixcond1} and~\ref{c:mixcond2} must hold, but with the same sequence $(r_{n})$. If Condition~\ref{c:mixcond1} holds with the sequence $(r_{n}^{(1)})$ as given in (\ref{e:rn1}) and Condition~\ref{c:mixcond2} holds with the sequence $(r_{n}^{(2)})$ as given in (\ref{e:rn2}), by letting $r_{n} = r_{n}^{(1)} \vee r_{n}^{(2)}$, we can repeat all the arguments from the proofs of Propositions~\ref{p:mixcond1} and~\ref{p:mixcond2jrv}, and hence obtain that Conditions~\ref{c:mixcond1} and~\ref{c:mixcond2} both hold with the sequence $(r_{n})$.
\end{remark}

\begin{proof} (Theorem~\ref{t:functconvergence})
 Consider $0<u<v$ and
$$  \phi^{(u)} (N_{n}\,|\,_{[0,1] \times \EE_{u}}) (\,\cdot\,)
  = \phi^{(u)} (N_{n}\,|\,_{[0,1] \times \EE_{v}}) (\,\cdot\,)
  = \bigvee_{i/n \leq \, \cdot} \frac{X_{i}}{a_{n}} 1_{ \big\{ \frac{X_{i}}{a_{n}} > u
    \big\} },$$
which by (\ref{e:Nnconv}), Lemma~\ref{l:contfunct} and the continuous mapping theorem converges in distribution under the $J_{1}$ metric to
$
\phi^{(u)}
(N^{(v)})(\,\cdot\,)
 =\phi^{(u)} (N^{(v)}\,|\,_{[0,1] \times \EE_{u}})(\,\cdot\,).
$
Using the arguments from the proof of Theorem 4.1 in Krizmani\'{c}~\cite{Kr14} this can be rewritten as
 \begin{equation}\label{eq:convaboveu}
  M_{n}^{(u)}(\,\cdot\,) := \bigvee_{i = 1}^{\lfloor n \, \cdot \, \rfloor} \frac{X_{i}}{a_{n}} 1_{ \big\{ \frac{X_{i}}{a_{n}} > u
    \big\} } \dto M^{(u)}(\,\cdot\,) := \bigvee_{T_{i} \leq \, \cdot} K_{i}^{(u)} \quad \text{as} \ n \to \infty,
 \end{equation}
 in $D[0,1]$ under the $J_{1}$ metric, where
 $ \widetilde{N}^{(u)} = \sum_{i} \delta_{(T_{i},\,K_{i}^{(u)})}
 $
 is a Poisson process with mean measure $\lambda \times \nu^{(u)}$ and
 $$  \nu^{(u)}(x, \infty) = u^{-\alpha} \PP( uY_0 > x) , \qquad x >0.$$ Note that
$ \nu^{(u)}(dx) = \alpha x^{-\alpha -1}1_{(u,\infty)}(x)\,dx.$

The limiting process $M^{(u)}(\,\cdot\,)$ is an extremal process with exponent measure $\nu^{(u)}$, since for $t \in [0,1]$ and $x >0$
$$ \PP (M^{(u)}(t) \leq x) = \PP(\widetilde{N}^{(u)}((0,t] \times (x,\infty))=0) = e^{-t\nu^{(u)}(x,\infty)}.$$

Now as in the proof of Theorem 4.1 in Krizmani\'{c}~\cite{Kr14} one shows that, as $u \to 0$, the finite dimensional distributions of $M^{(u)}(\,\cdot\,)$ converge to the finite dimensional distributions of an extremal process $\widetilde{M}(\,\cdot\,)$ generated by a Poisson process $\sum_{i}\delta_{(T_{i},K_{i})}$ with mean measure $\lambda \times \nu$, i.e. $\widetilde{M}(t) = \bigvee_{T_{i} \leq t} K_{i}$, $t \in [0,1]$.

Since we obtained convergence of finite dimensional distributions, in order to obtain $J_{1}$ convergence of $M^{(u)}(\,\cdot\,)$ to $\widetilde{M}(\,\cdot\,)$ as $u \to 0$, according to the well known result regarding tightness with respect to the $J_{1}$ topology (see Theorems 3.2.1 and 3.2.2 in Skorohod~\cite{Sk56}) we need only to show
$$\lim_{\delta \to 0} \limsup_{u \to 0} \PP ( \omega_{\delta}'(M^{(u)}(\,\cdot\,)) > \epsilon ) =0,$$
for every $\epsilon >0$, where $\omega_{\delta}'(x)$ is the $J_{1}$ oscillation of $x \in D[0,1]$, i.e.
$$ \omega_{\delta}'(x) = \sup_{{\footnotesize \begin{array}{c}
                                t_{1} \leq t \leq t_{2} \\
                                0 \leq t_{2}-t_{1} \leq \delta
                              \end{array}}
} \min\{| x(t)-x(t_{1})|, |x(t_{2})-x(t)| \},$$
for $\delta >0$.
Fix $\epsilon >0$ and take $u \in (0, \epsilon)$.
We can represent
\begin{equation}\label{e:repr}
 \widetilde{N}^{(u)}(([0,1] \times \EE_{\epsilon}) \cap\,\cdot\,) = \sum_{i=1}^{\xi}\delta_{(U_{i},V_{i}^{(u)})}(\,\cdot\,),
\end{equation}
where
$U_{1}, U_{2}, \ldots$ are i.i.d. uniformly distributed on $(0,1)$, $V_{1}^{(u)}, V_{2}^{(u)}, \ldots$ are i.i.d. with distribution $\nu^{(u)}(\EE_{\epsilon} \cap\,\cdot\,) / \nu^{(u)}(\EE_{\epsilon})$, and $\xi$ is a Poisson random variable with parameter $s:=(\lambda \times \nu^{(u)})([0,1] \times \EE_{\epsilon})= \nu^{(u)}((\epsilon, \infty))$ and independent of $(U_{i}, V_{i}^{(u)})_{i \geq 1}$ (cf. Resnick~\cite{Re07}, page 147).
Since $u < \epsilon$ we obtain $s=\epsilon^{-\alpha}$.

Note that $\omega_{\delta}'(M^{(u)}(\,\cdot\,)) > \epsilon$
 implies the existence of $t_{1} \leq t \leq t_{2}$ such that $0 \leq t_{2}-t_{1} \leq \delta$, $M^{(u)}(t) - M^{(u)}(t_{1}) > \epsilon$ and $M^{(u)}(t_{2}) - M^{(u)}(t)> \epsilon$, i.e.
$$\bigvee_{t_{1} \leq T_{i} \leq t} K_{i}^{(u)} > \epsilon \quad \textrm{and} \quad \bigvee_{t < T_{i} \leq t_{2}}K_{i}^{(u)} > \epsilon.$$
 Therefore there exist $T_{i} \in (t_{1}, t]$ and $T_{j} \in (t, t_{2}]$ such that
$K_{i}^{(u)}>\epsilon$ and $K_{j}^{(u)}>\epsilon$. This means that $M^{(u)}$ has (at least) two jumps on the set $(t_{1}, t_{2}]$ greater than $\epsilon$, i.e.
$\widetilde{N}^{(u)}((t_{1}, t_{2}] \times \EE_{\epsilon}) \geq 2$. Using the representation in (\ref{e:repr}) we get
$$ \sum_{i=1}^{\xi}\delta_{(U_{i},V_{i}^{(u)})}((t_{1}, t_{2}] \times \EE_{\epsilon}) \geq 2.$$
Therefore
\begin{eqnarray*}
   \PP ( \omega_{\delta}'(M^{(u)}(\,\cdot\,)) > \epsilon ) & \leq & \PP \bigg( \bigcup_{1 \leq i < j \leq \xi} \{|U_{i}-U_{j}| \leq \delta \} \bigg)\\[0.3em]
   &=&  \sum_{n=0}^{\infty} \PP \bigg(  \bigcup_{1 \leq i < j \leq n} \{|U_{i}-U_{j}| \leq \delta \} \bigg)\,\PP(\xi=n)\\[0.3em]
   & \leq & \sum_{n=0}^{\infty} {n \choose 2} \PP (|U_{1}-U_{2}| \leq \delta)\,e^{-s}\frac{s^{n}}{n!}.
\end{eqnarray*}
Since random variables $U_{i}$ are uniformly distributed on $(0,1)$ by standard calculations we get $\PP(|U_{1}-U_{2}| \leq \delta)=\delta (2-\delta)$ for $\delta <1$ (and obviously $\PP(|U_{1}-U_{2}| \leq \delta)= 1$ for $\delta  \geq 1$). Thus for $\delta < 1$ it holds that
\begin{eqnarray*}
   \PP ( \omega_{\delta}'(M^{(u)}(\,\cdot\,)) > \epsilon ) & \leq &  \delta (2-\delta)e^{-s}\frac{s^{2}}{2} \sum_{n=2}^{\infty} \frac{s^{n-2}}{(n-2)!}
    =  \delta (2-\delta) \frac{s^{2}}{2},
\end{eqnarray*}
 and this yields
$$\lim_{\delta \to 0} \limsup_{u \to 0} \PP ( \omega_{\delta}'(M^{(u)}(\,\cdot\,)) > \epsilon ) =0.$$
Therefore
\begin{equation}\label{eq:convtoMtilda}
M^{(u)}(\,\cdot\,) \dto \widetilde{M}(\,\cdot\,) \qquad \text{as} \ u \to 0,
\end{equation}
in $D[0,1]$ with the $J_{1}$ topology.

With the same arguments as in the proof of Theorem 4.1 in Krizmani\'{c}~\cite{Kr14} one shows that
$$ \lim_{u \to 0}\limsup_{n \to \infty} \PP(d_{J_{1}}(M_{n}(\,\cdot\,),M_{n}^{(u)}(\,\cdot\,)) > \epsilon)=0.$$
This with (\ref{eq:convaboveu}) and (\ref{eq:convtoMtilda}), according to a variant of Slutsky's theorem (see Theorem 3.5 in Resnick~\cite{Re07}), allows us to conclude that, as $n \to \infty$,
$ M_{n}(\,\cdot\,) \dto \widetilde{M}(\,\cdot\,)$,
in $D[0,1]$ with the $J_{1}$ topology.\qed
\end{proof}

\begin{example}
(Stochastic volatility models) Consider the stochastic volatility process $(X_{n})$ given by the equation
$$ X_{n}=\sigma_{n} Z_{n}, \qquad n \in \mathbb{Z},$$
where the noise sequence $(Z_{n})$ consists of nonnegative i.i.d.~regularly varying random variables with index $\alpha >0$, and $(\log \sigma_{n})$ is a Gaussian causal ARMA process which is independent of the sequence $(Z_{n})$.

Then $(X_{n})$ satisfies the strong mixing condition with geometric rate (see Davis and Mikosch~\cite{DaMi09}). By virtue of Breiman's result on regularly varying tail of a product of two independent random variables (cf. Proposition 3 in Breiman~\cite{Br65} and equation (16) in Davis and Mikosch~\cite{DaMi09}), every $X_{n}$ is regularly varying with index $\alpha$. From Theorem 2 in Davis and Mikosch~\cite{DaMi09b} it follows that the extremal index of $(X_{n})$ is equal to $1$.

Hence all conditions in Theorem~\ref{t:functconvergence} are satisfied and we obtain the $J_{1}$ convergence of partial maxima process toward an extremal process in $D[0,1]$.
\end{example}

\section{Necessity of the regular variation condition}\label{s:three}

In the i.i.d.~case the $J_{1}$ convergence of the partial maxima processes $M_{n}(\,\cdot\,)$ to an extremal process implies the regular variation property of $X_{n}$'s (cf. Proposition 7.2 in Resnick~\cite{Re07}). In this section we extend this result to the dependence case when clustering of large values do not occur. This can be viewed as a certain converse of Theorem~\ref{t:functconvergence}, but now we do not have to impose the strong mixing condition on the sequence $(X_{n})$. First we state a simple result on the continuity of the projection to the right endpoint in the $J_{1}$ topology. Since its proof is straightforward we omit it here.

\begin{lemma}\label{l:picont}
The function $\pi \colon D[0,1] \to \mathbb{R}$ defined by $\pi(x)=x(1)$ is continuous with respect to the $J_{1}$ topology on $D[0,1]$.
\end{lemma}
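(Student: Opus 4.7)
The plan is to verify continuity directly from the definition of the Skorohod $J_{1}$ metric. Recall that
$$ d_{J_{1}}(x,y) = \inf_{\lambda \in \Lambda} \bigl\{ \|x \circ \lambda - y\|_{\infty} \vee \|\lambda - e\|_{\infty} \bigr\}, $$
where $\Lambda$ denotes the set of strictly increasing continuous bijections $\lambda \colon [0,1] \to [0,1]$ and $e$ is the identity map on $[0,1]$.

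First I would note the key structural fact: every $\lambda \in \Lambda$ is an order-preserving bijection of $[0,1]$, so it necessarily fixes the endpoints, i.e.\ $\lambda(0)=0$ and $\lambda(1)=1$. This is the only nontrivial observation needed.

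Next, suppose $x_{n} \to x$ in $D[0,1]$ under the $J_{1}$ topology. Then there exist $\lambda_{n} \in \Lambda$ with $\|x_{n} \circ \lambda_{n} - x\|_{\infty} \to 0$ and $\|\lambda_{n} - e\|_{\infty} \to 0$ as $n \to \infty$. Evaluating the first supremum bound at the point $t=1$ and using $\lambda_{n}(1)=1$ yields
$$ |\pi(x_{n}) - \pi(x)| = |x_{n}(1) - x(1)| = |x_{n}(\lambda_{n}(1)) - x(1)| \leq \|x_{n} \circ \lambda_{n} - x\|_{\infty} \to 0, $$
so $\pi(x_{n}) \to \pi(x)$, giving continuity of $\pi$ at $x$. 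Since $x$ was arbitrary, $\pi$ is continuous on all of $D[0,1]$. There is no real obstacle here; the entire argument hinges on the simple fact that admissible $J_{1}$ time changes fix the right endpoint, which is why the value at $t=1$ is preserved in the limit.
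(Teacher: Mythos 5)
Your argument is correct and is precisely the standard ``straightforward'' proof that the paper omits (the paper states the lemma without proof). The key point --- that every admissible $J_{1}$ time change $\lambda$ is an increasing bijection of $[0,1]$ and hence fixes $t=1$, so that $|x_{n}(1)-x(1)| \leq \|x_{n}\circ\lambda_{n}-x\|_{\infty} \to 0$ --- is exactly what is needed, and sequential continuity suffices since the $J_{1}$ topology is metrizable.
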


\begin{theorem}\label{t:functconvreverse}
Let $(X_{n})$ be a strictly stationary sequence of nonnegative random variables. Suppose the sequence $(X_{n})$ has extremal index $\theta=1$.
If
 $ M_{n}(\,\cdot\,) \dto \widetilde{M}(\,\cdot\,)$
in $D[0,1]$ endowed with the $J_{1}$ topology, where $\widetilde{M}(\,\cdot\,)$
is an extremal process with exponent
measure $\nu$,
then
$$ n  \PP
    ( a_{n}^{-1} X_{1} \in \cdot\,) \xrightarrow{v} \nu(\,\cdot\,) \qquad \textrm{as} \ n \to \infty.$$
\end{theorem}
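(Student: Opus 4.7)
My plan is to reduce the functional $J_1$ convergence to a one-dimensional marginal convergence at $t=1$ via the endpoint projection, and then use the hypothesis $\theta=1$ to pass from the limit of $M_n/a_n$ to the marginal tail of $X_1/a_n$.

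First I would apply Lemma~\ref{l:picont} and the continuous mapping theorem to the assumed convergence $M_n(\cdot) \dto \widetilde{M}(\cdot)$, which yields $M_n(1) = M_n/a_n \dto \widetilde{M}(1)$. Since $\widetilde{M}(1)$ has distribution $\PP(\widetilde{M}(1) \leq x) = e^{-\nu(x,\infty]}$ (read off the Poisson generator of the extremal process), it follows that $\PP(M_n \leq xa_n) \to e^{-\nu(x,\infty]}$ at every continuity point $x>0$ of $y \mapsto \nu(y,\infty]$. Since $(X_n)$ has extremal index $\theta=1$, I would then invoke the standard equivalence from extreme value theory (Theorem 2.2.1 in Leadbetter and Rootz\'en): for any level sequence $(u_n)$ and any $\tau \geq 0$, the convergence $\PP(M_n \leq u_n) \to e^{-\tau}$ is equivalent to $n\bar{F}(u_n) \to \tau$ when $\theta=1$. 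Applying this with $u_n = xa_n$ gives $n\PP(X_1 > xa_n) \to \nu(x,\infty]$ at every continuity point $x>0$. Finally, to upgrade this pointwise tail convergence to the required vague convergence on $\EE=(0,\infty]$, I would use the standard criterion that, since compact subsets of $\EE$ are exactly the sets bounded away from $0$, vague convergence of Radon measures on $\EE$ is equivalent to the convergence $\mu_n((x,\infty]) \to \mu((x,\infty])$ at every continuity point $x>0$ of $y \mapsto \mu(y,\infty]$.

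The main obstacle is the converse direction of the extremal-index equivalence in the middle step: inferring $n\bar{F}(u_n) \to \tau$ from $\PP(M_n \leq u_n) \to e^{-\tau}$. This is where the full strength of the assumption $\theta=1$ enters; without it, a stationary sequence could in principle realize the prescribed $J_1$ limit for $M_n(\cdot)$ while its marginal $X_1$ failed to be regularly varying, the discrepancy being absorbed into clustering of extremes. The remaining ingredients---continuity of the endpoint projection in $J_1$, the explicit form of the distribution of $\widetilde{M}(1)$, and the tail-functional characterization of vague convergence on $\EE$---are routine.
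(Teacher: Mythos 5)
Your proof is correct and follows essentially the same route as the paper's: endpoint projection plus the continuous mapping theorem and Lemma~\ref{l:picont}, then the extremal-index relation of Leadbetter and Rootz\'en (Theorem 2.2.1) with $\theta=1$ to pass from the limit law of $M_n/a_n$ to $n\PP(X_1>xa_n)\to\nu(x,\infty]$, and finally the standard tail-functional criterion for vague convergence on $(0,\infty]$. The only cosmetic difference is that the paper routes the middle step explicitly through the associated i.i.d.\ sequence $(\widehat X_n)$ and the i.i.d.\ equivalence between regular variation and weak convergence of maxima, which you collapse into a single application of the same theorem.
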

\begin{proof}
From the functional $J_{1}$ convergence  $ M_{n}(\,\cdot\,) \dto \widetilde{M}(\,\cdot\,)$, by the continuous mapping theorem and Lemma~\ref{l:picont}, we get
$M_{n}(1) \dto \widetilde{M}(1)$, i.e.
$$ \PP \bigg( \bigvee_{i=1}^{n}\frac{X_{i}}{a_{n}} \leq x \bigg) \dto \PP(\widetilde{M}(1) \leq x) = e^{- \nu(x,\infty)} \qquad \textrm{as} \ n \to \infty,$$
for every $x>0$. Let $(\widehat{X}_{n})$ be the associated independent sequence of $(X_{n})$, i.e. $(\widehat{X}_{n})$ is an i.i.d. sequence with $\widehat{X}_{1} \eqd X_{1}$. Then by Theorem 2.2.1 in Leadbetter and Rootz\'{e}n~\cite{LeRo88}
\begin{equation*}\label{e:LRpowernew}
 \PP \bigg( \bigvee_{i=1}^{n} \frac{\widehat{X}_{i}}{a_{n}} \leq x \bigg) \to e^{-\frac{1}{\theta} \nu(x,\infty)} \qquad \textrm{as} \ n \to \infty.
\end{equation*}
From this, taking into account the equivalence of the regular variation property and the weak convergence of maxima for an i.i.d.~sequence (cf. Lemma 1.2.2 in Leadbetter and Rootz\'{e}n~\cite{LeRo88} and Proposition 7.1 in Resnick~\cite{Re07}) and the fact that $\theta=1$, we obtain
$n \PP (a_{n}^{-1}\widehat{X}_{1} > x) \to \nu(x,\infty)$.
This implies
$$ n  \PP
    ( a_{n}^{-1} X_{1} \in \cdot\,) \xrightarrow{v} \nu(\,\cdot\,) \qquad \textrm{as} \ n \to \infty$$
    (cf. Lemma 6.1 in Resnick~\cite{Re07}).\qed
\end{proof}

When $\theta < 1$, i.e. clustering of large values occurs, then generally we can not have the $J_{1}$ convergence of the partial maxima process (see Example 5.1 in Krizmani\'{c}~\cite{Kr14}), but convergence in the weaker $M_{1}$ topology may still hold (cf. Theorem 4.1 in Krizmani\'{c}~\cite{Kr14}). And if it holds then we can recover the regular variation property, as is shown in the next result, which generalizes Theorem~\ref{t:functconvreverse}.

\begin{theorem}
Let $(X_{n})$ be a strictly stationary sequence of nonnegative random variables. Suppose the sequence $(X_{n})$ has extremal index $\theta \in (0,1]$.
If
 $ M_{n}(\,\cdot\,) \dto \widetilde{M}(\,\cdot\,)$
in $D[0,1]$ endowed with the $M_{1}$ topology, where $\widetilde{M}(\,\cdot\,)$
is an extremal process with exponent
measure $\nu$,
then
$$ n  \PP
    ( a_{n}^{-1} X_{1} \in \cdot\,) \xrightarrow{v} \frac{1}{\theta}\nu(\,\cdot\,) \qquad \textrm{as} \ n \to \infty.$$
\end{theorem}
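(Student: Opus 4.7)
The plan is to follow closely the argument of Theorem~\ref{t:functconvreverse}, making two modifications: first, handle the fact that the endpoint projection is \emph{not} globally continuous in the $M_{1}$ topology, only at points of continuity of the argument at $t=1$; second, carry the extremal index $\theta$ through the application of Leadbetter--Rootz\'{e}n's relation between maxima of dependent and associated independent sequences.

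First I would note a weaker $M_{1}$--version of Lemma~\ref{l:picont}: the projection $\pi \colon D[0,1] \to \mathbb{R}$, $\pi(x)=x(1)$, is continuous at every $x \in D[0,1]$ that is continuous at $t=1$ (this is standard for the $M_{1}$ topology; see Whitt~\cite{Whitt02}, Section~12.3). Next I would check that $\widetilde{M}(\,\cdot\,)$ is almost surely continuous at $t=1$. Since $\widetilde{M}(\,\cdot\,)$ is generated by a Poisson process $\sum_{i}\delta_{(T_{i},K_{i})}$ with mean measure $\lambda \times \nu$, the jump times $\{T_{i}\}$ form a Poisson point process on $(0,\infty)$ with Lebesgue intensity $\nu(\EE)$, which has no atoms, so $\PP(T_{i}=1 \text{ for some } i)=0$; hence $\widetilde{M}(1-)=\widetilde{M}(1)$ almost surely. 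Therefore $\pi$ is continuous at $\widetilde{M}(\,\cdot\,)$ with probability one and the continuous mapping theorem yields
$$\PP \bigg( \bigvee_{i=1}^{n}\frac{X_{i}}{a_{n}} \leq x \bigg) \to \PP(\widetilde{M}(1) \leq x) = e^{-\nu(x,\infty)} \qquad \textrm{as} \ n \to \infty,$$
for every $x>0$ that is a continuity point of $x \mapsto \nu(x,\infty)$.

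Now I would introduce the associated i.i.d.\ sequence $(\widehat{X}_{n})$ with $\widehat{X}_{1} \eqd X_{1}$ and its maxima $\widehat{M}_{n}$. By Theorem~2.2.1 in Leadbetter and Rootz\'{e}n~\cite{LeRo88}, weak convergence of $a_{n}^{-1}M_{n}$ to a non-degenerate limit $H$ is equivalent to weak convergence of $a_{n}^{-1}\widehat{M}_{n}$ to $G$ with $H=G^{\theta}$. Combining this with the previous display gives
$$ \lim_{n \to \infty}\PP\bigg( \bigvee_{i=1}^{n}\frac{\widehat{X}_{i}}{a_{n}} \leq x \bigg) \;=\; \big( e^{-\nu(x,\infty)} \big)^{1/\theta} \;=\; e^{-\frac{1}{\theta}\nu(x,\infty)}.$$
At this point I would apply the classical equivalence between the regular variation of the tail and the weak convergence of maxima for an i.i.d.\ sequence (Lemma~1.2.2 in Leadbetter and Rootz\'{e}n~\cite{LeRo88}, or Proposition~7.1 in Resnick~\cite{Re07}) to deduce
$$ n\PP(a_{n}^{-1}\widehat{X}_{1}>x) \to \tfrac{1}{\theta}\nu(x,\infty) \qquad \textrm{as} \ n \to \infty,$$
for every continuity point $x>0$. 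Since $\widehat{X}_{1} \eqd X_{1}$, and since vague convergence on $\EE=(0,\infty]$ is determined by the values of the tail function at continuity points (cf.\ Lemma~6.1 in Resnick~\cite{Re07}), this is precisely
$$ n\PP(a_{n}^{-1}X_{1} \in \cdot\,) \vto \tfrac{1}{\theta}\nu(\,\cdot\,) \qquad \textrm{as} \ n \to \infty.$$

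The only real subtlety — and what I expect to be the sole obstacle — is the $M_{1}$-continuity of $\pi$ at the limit process, since $\pi$ fails to be globally continuous in the $M_{1}$ topology. Once the almost sure continuity of $\widetilde{M}(\,\cdot\,)$ at $t=1$ is established (which is immediate from the Poisson structure of the jump times), the rest is a direct extension of the proof of Theorem~\ref{t:functconvreverse} with the extra factor of $1/\theta$ produced at the stage of applying Theorem~2.2.1 of Leadbetter and Rootz\'{e}n.
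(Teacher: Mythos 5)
Your proposal is correct and follows essentially the same route as the paper: project to the value at $t=1$, use Leadbetter--Rootz\'{e}n's Theorem 2.2.1 to pass to the associated i.i.d.\ sequence (which is where the factor $1/\theta$ enters), and then invoke the classical equivalence between weak convergence of i.i.d.\ maxima and regular variation of the tail. The only (harmless) deviation concerns the endpoint projection: the paper simply cites Theorem 12.5.1(iv) of Whitt~\cite{Whitt02}, by which $\pi(x)=x(1)$ is in fact globally continuous on $D[0,1]$ with the $M_{1}$ topology (time changes fix the endpoints), so your detour through the almost sure continuity of $\widetilde{M}(\,\cdot\,)$ at $t=1$, while valid, is not actually needed.
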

\begin{proof}
The proof is practically the same as the proof of Theorem~\ref{t:functconvreverse}, with the difference that instead of the Lemma~\ref{l:picont} one has to use the corresponding result for the continuity of the function $\pi$ with respect to the $M_{1}$ topology on $D[0,1]$ (see Theorem 12.5.1 (iv) in Whitt~\cite{Whitt02}).\qed
\end{proof}

\begin{remark}
In the light of the results presented in this article one can raise a question whether the property that the extremal index equals to $1$ is a necessary condition for the $J_{1}$ convergence of the partial maxima processes. The answer is negative. Consider the finite order moving maxima defined by
$$ X_{n} = \max \{\xi_{n}, \xi_{n-1}\}, \qquad n \in \mathbb{Z},$$
 where $\xi_{i}$, $i \in \mathbb{Z}$, are i.i.d.
  unit Fr\'{e}chet random variables, i.e. $\PP (\xi_{i} \leq x) = e^{-1/x}$ for $x>0$. Then $(X_{n})$ is strongly mixing and jointly regularly varying, $\theta = 1/2$ and the corresponding partial maxima processes $M_{n}(\,\cdot\,)$ converge to an extremal process in the $M_{1}$ topology (see Example 5.1 in Krizmani\'{c}~\cite{Kr14}). This implies the convergence of the finite dimensional distributions of $M_{n}(\,\cdot\,)$. Tightness with respect to $J_{1}$ topology can be obtained in a standard manner (and thus we omit it here), and we conclude that partial maxima processes converge also in the $J_{1}$ topology. A "big value" $\xi_{i}$ produces two successive big values $X_{i}$ and $X_{i+1}$ in the sequence $(X_{n})$, but with the same magnitude, and this produces only one jump in the maxima process $M_{n}(\,\cdot\,)$ (at $t=i/n$), thus allowing the $J_{1}$ convergence to hold.

  \end{remark}

\section*{Acknowledgements}
This work has been supported in part by Croatian Science Foundation under the project 3526.

\end{document}